\documentclass[12pt,reqno]{amsart}
\usepackage{graphicx}
\usepackage{epstopdf}
\usepackage{url}
\usepackage{booktabs}  
\usepackage{enumitem}
\usepackage{latexsym}
\usepackage{amscd, amsfonts,amsthm, amsmath,amssymb}
\newtheorem{theorem}{Theorem}[section]

\newtheorem{proposition}[theorem]{Proposition}

\newtheorem{lemma}[theorem]{Lemma}

\newtheorem{definition}[theorem]{Definition}

\newtheorem{corollary}[theorem]{Corollary}

\newtheorem{example}[theorem]{Example}

\def\S{\mathcal{S}}
\def\321{\underline{32}1}
\def\012{0\underline{12}}
\def\inv{\mathrm{inv}}
\def\des{\mathrm{des}}
\def\exp{\mathrm{exp}}

\def\rp{\mathrm{rp}}
\newcommand{\I}{\mathbf{I}}
\def\alert#1{{\color{blue!40!red}\textit{#1}}}
\newcommand{\qbinom}[2]{\genfrac{[}{]}{0pt}{}{#1}{#2}_q}

\usepackage{hyperref}
\hypersetup{
    colorlinks=true,
    linkcolor=cyan!50!blue,
    filecolor=magenta,      
    urlcolor=cyan,
    pdfpagemode=FullScreen,
    citecolor=green!60!black,
     }
\usepackage{cite}

\date{\today}
\usepackage{xcolor}%

\usepackage{tikz}
\usetikzlibrary{arrows.meta,positioning,calc,fit}
\usepackage{forest}
\usepackage{tikz}
\usetikzlibrary{trees,arrows.meta,positioning}
\usepackage{tikz}
\usetikzlibrary{positioning}
\usepackage{xcolor}
\tikzset{
  leaf/.style   ={circle,draw,minimum size=6mm,inner sep=1pt},
  minnode/.style={leaf, fill=blue!12, draw=blue!60!black},
  maxnode/.style={leaf, fill=red!12,  draw=red!60!black},
  note/.style   ={font=\scriptsize},
}
\usepackage{tikz}
\usepackage{forest}
\forestset{
  mytree/.style={
    for tree={
      draw, circle, inner sep=1.5pt, s sep=7mm, l sep=8mm,
      anchor=north
    }
  },
  highlight/.style={fill=gray!15, draw=black}
}

\begin{document}
\title[Inversion-descent enumerators of $\underline{32}1$-avoiding permutations]{Inversion-descent enumerators of $\underline{32}1$-avoiding permutations}
\author{Qiongqiong Pan}
\address{College of Mathematics and Physics, Wenzhou University\\
Wenzhou 325035, PR China}
\email{qpan@wzu.edu.cn}

\keywords{Pattern avoidances; Inversions; $q$-exponential generating function; Restricted-growth words}
\begin{abstract}
We give the recurrence relation for counting polynomials for permutations avoiding the $\321$  vincular pattern, with respect to inversion numbers and descent numbers, and consequently give the $q$-exponential generating function for these counting polynomials. Furthermore, we give another explanation for these counting polynomials in terms of restricted-growth words.
\end{abstract}
\maketitle

\section{Introduction}
Given permutations $\tau=\tau_1\tau_2\dots\tau_k\in\S_k$ and $\pi=\pi_1\pi_2\dots\pi_n\in\S_n$, we say that $\pi$ \alert{contains} $\tau$ if there exist indices $1\leq i_1<i_2<\dots<i_k\leq n$ such that the entries of the subsequence $\pi_{i_1}\pi_{i_2}\dots\pi_{i_k}$ are in the same relative order as the entries of $\tau$; otherwise  we say that $\pi$ \alert{avoids} $\tau$. We use $\S_n(\tau)$ to denote the subset of $\S_n$ in which the permutations avoid $\tau$. There are numerous fruitful results on pattern avoidance; see \cite{Bona12, Kitaev11} for references.

In this paper, we consider permutations that avoid a \emph{vincular pattern}, which was pioneered by Babson and Steingr\'imsson \cite{BS00}. A \alert{vincular pattern} $\tau$ is a permutation in $\S_k$, some of whose consecutive entries are underlined. If $\pi\in\S_n$ contains the vincular pattern $\tau$, and $\tau$ contains $\underline{\tau_i\tau_{i+1}\dots\tau_j}$, then the entries of $\pi$ corresponding to $\tau_i, \tau_{i+1},\dots,\tau_j$ must be adjacent.

Recently, Beveridge, Heysse and Robertson \cite{BHR26} determined the maximum inversion number for $\underline{32}1$-avoiding permutations and counted the number of permutations that achieve this maximum. Soon after, Beverdge, Hu, and Liu \cite{BHL26} showed that the set of permutations avoiding $\underline{32}1$ (resp. $3\underline{21}$) that achieve the maximum inversion number is enumerated by the Fibonacci numbers. Claesson \cite{C01} proved that the cardinality of the class $\S_n(1\underline{23})$ is the $n$-th Bell number. Then, by combining the reversal action on $\S_n(1\underline{23})$, it is obvious that the cardinality of $\S_n(\321)$ is also the $n$-th Bell number. In this paper, we discuss the inversion-descent polynomials on $\S_n(\321)$, and we give the recurrence relation for these polynomials in Section \ref{sect-1}. In Section \ref{sect-2}, we give a $q$-exponential generating function for the inversion-descent polynomials. In Section \ref{sect-3}, we relate these inversion-descent polynomials to restricted-growth words. Here, we give the definition of inversion number on permutations to end this section.
\begin{definition}\label{def-inversion}
The \alert{inversion number} of permutation $\pi\in\S_n$ is 
\[
\inv(\pi)=\{(i, j):i<j\;\text{and}\;\pi_i>\pi_j\}.
\]
\end{definition}

\section{Lehmer codes of $\S_n(\321)$}\label{sect-1}
In this section, we first recall the definition of Lehmer codes, which is a way of encoding permutations in combinatorics, and then give some results on Lehmer codes of $\S_n(\321)$.
\begin{definition}
The collection $\mathcal{L}_n$ of \alert{Lehmer codes} of length $n$ is
\[
\mathcal{L}_n=\{(p_1, p_2,\dots, p_n): 0\leq p_i\leq n-i\;\text{for}\;1\leq i\leq n\}.
\]
\end{definition}
For a permutation $\pi\in\S_n$, its Lehmer code is
$$
L(\pi):=(p_1, p_2,\dots, p_n) \;\text{where}\; p_i=|\{j>i: \pi_j<\pi_i\}|.
$$
Comparing the Definition \ref{def-inversion} with the Lehmer codes of permutations, we have that for $\pi\in\S_n$, $\inv(\pi)=w(L(\pi))$, where $w(L(\pi))=\sum_{i=1}^np_i$.

It is slightly more convenient for this paper to use reverse indexing. Thus, we give the following definition.
\begin{definition}\label{def-a}
 For $\pi\in\S_n$ and  let $L(\pi)=(p_1, p_2,\dots,p_{n-1}, p_n)$, then 
 $$
 a(\pi):=a_1a_2\dots a_{n-1}a_n,
 $$ 
 where 
 $a_i=p_{n+1-i}$ for $1\leq i\leq n$.
\end{definition}
Definition \ref{def-a} gives directly that for $\pi\in\S_n$ with $a(\pi)=a_1a_2\dots a_{n-1}a_n$, we have $a_1=0$ and $0\leq a_i\leq i-1$. Let $\I_n=\{e_1e_2\dots e_{n-1}e_n): 0\leq e_i\leq i-1\}$, which is also called the set of inversion sequences. The Lehmer code gives directly that $\{a(\pi): \pi\in\S_n\}=\I_n$. We also use $\I_n(\tau)$ to denote the subset of $\I_n$ whose sequences avoid the pattern $\tau$.

The following proposition was  proved by Frosini, Guerrini, and Rinaldi \cite{FGR25} as a corollary of  some other result. Here, for the sake of the overall flow of the paper, we will reprove this proposition directly.
\begin{proposition}\label{Pro-avoid}
We have $\pi\in\S_n(\321)$ if and only if $a(\pi)\in\I_n(0\underline{12})$.
\end{proposition}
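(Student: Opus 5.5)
The plan is to translate both conditions into statements about the Lehmer code $L(\pi)=(p_1,\dots,p_n)$, using $a_i=p_{n+1-i}$ from Definition~\ref{def-a}. Under this reindexing an adjacent pair $a_j a_{j+1}$ corresponds to the adjacent pair $p_{m+1},p_m$ with $m=n-j$. An index $i<j$ corresponds to a position $k=n+1-i>m+1$. Hence $a(\pi)$ contains $0\underline{12}$ if and only if there exist $m$ and $k>m+1$ with
\[
p_k<p_{m+1}<p_m .
\]
On the other side, $\pi$ contains $\underline{32}1$ if and only if there exist $m$ and $k>m+1$ with $\pi_m>\pi_{m+1}>\pi_k$. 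So it suffices to show that these two conditions on $(m,k)$ are equivalent. They need not hold for the same $k$.

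The first step is the standard descent lemma: $p_m>p_{m+1}$ if and only if $\pi_m>\pi_{m+1}$.
\begin{itemize}
\item If $\pi_m>\pi_{m+1}$, then every entry to the right of position $m+1$ that is smaller than $\pi_{m+1}$ is also smaller than $\pi_m$. Moreover, $\pi_{m+1}$ itself is smaller than $\pi_m$. Hence $p_m\ge p_{m+1}+1$.
\item If $\pi_m<\pi_{m+1}$, then every entry to the right of $m$ that is smaller than $\pi_m$ lies to the right of $m+1$ and is smaller than $\pi_{m+1}$. Hence $p_m\le p_{m+1}$.
\end{itemize}
Thus the condition $p_{m+1}<p_m$ is exactly a descent at position $m$.

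The second step handles the third letter. I will show that, for a fixed $m$, the existence of $k>m+1$ with $\pi_k<\pi_{m+1}$ is equivalent to the existence of $k>m+1$ with $p_k<p_{m+1}$. Both conditions turn out to be equivalent to $p_{m+1}\ge 1$.
\begin{itemize}
\item The first condition says precisely that $p_{m+1}\ge1$, by the definition of $p_{m+1}$.
\item If $p_{m+1}\ge 1$, then $m+1<n$, and $k=n$ works, because $p_n=0<p_{m+1}$.
\item Conversely, $p_k<p_{m+1}$ with $p_k\ge 0$ forces $p_{m+1}\ge1$.
\end{itemize}
Combining the two steps gives the proposition. Since the two patterns are matched via the same $m$ but possibly different $k$, the proof should state this explicitly rather than claim a position-by-position correspondence of occurrences.

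The only delicate point is bookkeeping the reversal of indices, so that \emph{adjacent} in the vincular pattern on $\pi$ matches \emph{adjacent} in the pattern on $a(\pi)$, and \emph{to the left} in $a(\pi)$ matches \emph{to the right} in $\pi$. Once the dictionary $a_j<a_{j+1}\Leftrightarrow p_{m+1}<p_m\Leftrightarrow \pi_m>\pi_{m+1}$ is written down carefully, the rest is immediate.
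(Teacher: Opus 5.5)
Your proof is correct and takes essentially the same route as the paper: translate through the Lehmer code, use that $p_{m+1}<p_m$ exactly when $m$ is a descent of $\pi$, and use $a_1=p_n=0$ as the ``0'' of the $0\underline{12}$ pattern, since $p_{m+1}\ge 1$ exactly when $\pi_{m+1}$ has a smaller entry to its right. Your version is a bit more explicit than the paper's, because you prove the descent lemma and justify why any occurrence of $0\underline{12}$ can be replaced by one starting at $a_1$, two points the paper uses without comment.
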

\begin{proof}
If $\pi\in\S_n(\321)$ and $a(\pi)=a_1a_2\dots a_n$, then $a_1=0$. Suppose there exists an $i>1$ such that $a_1a_ia_{i+1}$ forms a $0\underline{12}$ pattern. This implies that $a_i<a_{i+1}$ and $\pi_{n+1-i}<\pi_{n-i}$. Since $a_{i}>0$, there exists a $j>n+1-i$ such that $\pi_{n+1-i}>\pi_j$. Thus, $\pi_{n-i}\pi_{n+1-i}\pi_j$ forms a $\321$ pattern, and this is a contradiction. So, $a(\pi)\in\I_n(0\underline{12})$.

If $\pi\notin\S_n(\321)$, that is, if there exist indices $1\leq i<j$ such that $\pi_i\pi_{i+1}\pi_j$ forms $\321$ pattern, then this implies that for $L(\pi)=(p_1, p_2,\dots, p_n)$, we have $p_i>p_{i+1}>0$, that is, $a_{n+1-i}>a_{n-i}>0$ in $a(\pi)=a_1a_2\dots a_n$. Then $a_1a_{n-i}a_{n+1-i}$ forms a $\012$ pattern, i.e., $a(\pi)\notin\I_n(\012)$.

Combining the above, we derive the result.
\end{proof}
The following proposition is essential for proving Theorem \ref{thm1}.
\begin{proposition}\label{pro-inversion}
For $\pi\in\S_n(\321)$ and $a(\pi)=a_1a_2\dots a_n$, whenever
$$
a_j<a_{j+1},
$$
we must have 
$$
a_j=0.
$$
\end{proposition}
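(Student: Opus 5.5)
This is essentially a restatement of Proposition \ref{Pro-avoid}, so I would deduce it directly from there rather than go back to the permutation. Take $\pi\in\S_n(\321)$ with $a(\pi)=a_1a_2\dots a_n$. By Proposition \ref{Pro-avoid}, $a(\pi)\in\I_n(\012)$. I would argue by contradiction: suppose there is an index $j$ with $a_j<a_{j+1}$ and $a_j\neq 0$, so $a_j\geq 1$.

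The first point is to locate a smaller entry strictly to the left of position $j$. Since $a_1=0$ for every inversion sequence and $a_j\geq 1$, we get $j\geq 2$. Hence $1<j<j+1$, and $a_1=0<a_j<a_{j+1}$. The second point is to check that the triple $a_1a_ja_{j+1}$ is an occurrence of $\012$. Its entries are in the relative order $0,1,2$, and the positions $j$ and $j+1$ are adjacent, as the underline in $\012$ requires. This contradicts $a(\pi)\in\I_n(\012)$. Therefore $a_j=0$ whenever $a_j<a_{j+1}$.

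If a self-contained argument is preferred, I would reproduce the first half of the proof of Proposition \ref{Pro-avoid}. With $i=j$, the ascent $a_j<a_{j+1}$ means $p_{n+1-j}<p_{n-j}$, so $\pi_{n-j}>\pi_{n+1-j}$. The condition $a_j>0$ then gives some $k>n+1-j$ with $\pi_k<\pi_{n+1-j}$. The entries $\pi_{n-j}\pi_{n+1-j}\pi_k$ then form an occurrence of $\321$, a contradiction.

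There is no real obstacle here. The only points needing care are that $j\geq 2$, which follows from $a_1=0$, so that a genuine $0$ entry exists to the left, and the index translation between $a$ and the Lehmer code.
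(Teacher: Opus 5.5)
Your proof is correct and follows the paper's argument: the paper also gets the statement in one line from $a_1=0$ together with Proposition~\ref{Pro-avoid}, and your triple $a_1a_ja_{j+1}$ is the $0\underline{12}$ occurrence that one-liner leaves implicit. Your optional self-contained version just repeats the first half of the paper's proof of Proposition~\ref{Pro-avoid}.
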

\begin{proof}
Since $a_1=0$ and by Proposition \ref{Pro-avoid}, the result directly holds.
\end{proof}
We are now in a position to state our main result in this section. To that end, we first give some definitions.

We say $i\in [n-1]$ is a \alert{descent} of $\pi\in\S_n$ if $\pi_i>\pi_{i+1}$. We use $\des(\pi)$ to denote the number of descents of $\pi$. Define the following polynomials:
\[
I_n(q,t):=\sum_{\pi\in\S_n(\321)}q^{\inv(\pi)}t^{\des(\pi)}.
\]

 Now, the main theorem of this section is following.
 \begin{theorem}\label{thm1}
 For $n\geq1$, we have
 \begin{align}\label{eq-main}
 I_n(q,t)=\sum_{k=0}^{n-2}tq^{n-k-1}\qbinom{n-1}{k}I_k(q,t)+I_{n-1}(q,t),
 \end{align}
 with initial $I_0(q,t)=1$, where \[
\qbinom{n}{k}
=
\frac{(q;q)_n}{(q;q)_k(q;q)_{n-k}}
\]
is the Gaussian binomial coefficient and $(q;q)_n=\prod_{i=1}^n(1-q^i)$.
 \end{theorem}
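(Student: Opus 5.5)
The plan is a first-element-style decomposition of $\pi\in\S_n(\321)$ according to the position of the entry $1$. The recurrence \eqref{eq-main} should then come out directly, without going through the Lehmer codes. (Alternatively, one could translate everything into $a(\pi)\in\I_n(\012)$ via Proposition~\ref{Pro-avoid} and Proposition~\ref{pro-inversion}. The direct route seems cleaner.)

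\textbf{Step 1: characterize the pattern.} First I would record that $\pi$ avoids $\321$ if and only if every descent bottom $\pi_{i+1}$ (where $\pi_i>\pi_{i+1}$) is smaller than every entry to its right. In other words, descent bottoms are right-to-left minima. This is immediate from the definition: a $\321$ occurrence is exactly a descent $\pi_i\pi_{i+1}$ together with some later $\pi_j<\pi_{i+1}$.

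\textbf{Step 2: decompose at the entry $1$.} Write $\pi=\sigma\,1\,\tau$.
\begin{itemize}
\item If $\sigma$ is empty, then $\pi=1\tau$. Here $\tau$ is an arbitrary $\321$-avoider on $\{2,\dots,n\}$, and the initial $1$ contributes no inversions and no descents. This gives the term $I_{n-1}(q,t)$.
\item If $\sigma$ is nonempty, any descent inside $\sigma$ would have a bottom larger than the later entry $1$, which is forbidden. Hence $\sigma$ is increasing. The step from the last entry of $\sigma$ down to $1$ is a descent, and it is allowed since $1$ is the minimum. The step from $1$ to $\tau_1$ is an ascent. Descents inside $\tau$ only involve entries of $\tau$, so the condition of Step 1 reduces to $\tau$ (standardized) avoiding $\321$.
\end{itemize}
Conversely, any increasing nonempty $\sigma$ on a value set $A\subseteq\{2,\dots,n\}$, followed by $1$ and then any $\321$-avoider on the complementary set $B$, is $\321$-avoiding.

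\textbf{Step 3: compute the statistics.} Let $|B|=k$, so $|A|=n-1-k\ge 1$ and $0\le k\le n-2$. Then
\[
\des(\pi)=1+\des(\tau),
\]
and $\inv(\pi)$ splits into three parts:
\begin{itemize}
\item the $n-1-k$ inversions between the entries of $A$ and $1$;
\item the inversions inside $\tau$, namely $\inv(\mathrm{st}(\tau))$, where $\mathrm{st}(\tau)$ denotes the standardization of $\tau$;
\item the cross inversions $\#\{(x,y)\in A\times B: x>y\}$.
\end{itemize}
The standard fact that summing $q^{\#\{(x,y)\in A\times B:\,x>y\}}$ over all $k$-subsets $B$ of an $(n-1)$-set gives $\qbinom{n-1}{k}$ then yields the summand $tq^{n-k-1}\qbinom{n-1}{k}I_k(q,t)$. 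Summing over $k$ gives \eqref{eq-main}.

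The base case is $I_0=1$, and $n=1$ checks directly, since the sum is empty and $I_1=I_0=1$.

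\textbf{Main obstacle.} I expect the delicate point to be the bijectivity in Step 2: verifying that gluing an arbitrary avoider $\tau$ after $1$ creates no new $\321$ occurrence. This holds because the only new adjacencies are (last entry of $\sigma$, $1$), whose bottom $1$ is the global minimum, and $(1,\tau_1)$, which is an ascent. The $q$-binomial step is routine, since the cross inversions between a set and its complement are counted by the inversions of the corresponding $0$-$1$ word.
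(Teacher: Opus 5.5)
Your proof is correct. It is the paper's decomposition carried out directly on permutations rather than on Lehmer codes.

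The paper cuts $a(\pi)$ at its last zero $a_{k+1}=0$. Since $a_i=0$ exactly when $\pi_{n+1-i}$ is a right-to-left minimum, and the leftmost right-to-left minimum is always the entry $1$, this is exactly your cut $\pi=\sigma\,1\,\tau$ with $|\tau|=k$. The prefix $a_1\cdots a_k$ encodes $\mathrm{st}(\tau)$, and the weakly decreasing suffix encodes your increasing block $\sigma$. The suffix entry at the position of $x\in A$ is $1+\#\{y\in B:\ y<x\}$. So the paper's evaluation of $q^{m}\qbinom{n-1}{k}$ via partitions in an $m\times k$ box is your factor $q^{n-1-k}$ (the inversions with $1$) times the cross-inversion generating function. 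The only difference is that the cross-inversion data is read as a partition rather than as a $0$-$1$ word.

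Your route gains transparency. The characterization of avoidance, the forced monotonicity of $\sigma$, the count $\des(\pi)=1+\des(\tau)$, and the independence of the choices of $\sigma$ and $\tau$ are each checked in a line. The paper instead must pass through Proposition~\ref{Pro-avoid} and Proposition~\ref{pro-inversion} and translate descents of $\pi$ into ascents of $a(\pi)$.

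The paper's route, in return, makes the inversion weight automatic, since it is just the sum of the code entries. It also frames the result via the inversion sequences $\I_n(0\underline{12})$, which is the connection the paper wants to highlight.
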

\begin{proof}
Let $\pi\in\S_n(\321)$, then by Proposition \ref{Pro-avoid}, $a(\pi)=a_1a_2\dots a_n\in\I_n(\012)$. Suppose the last zero occurs at position $k+1$, where $0\leq k\leq n-1$; thus $a_{k+1}=0$. We divide $a(\pi)$ into two parts by the last zero:
\[
a(\pi)=
\underbrace{a_1a_2\cdots a_k}_{\text{prefix}}
0
\underbrace{a_{k+2}\cdots a_n}_{\text{suffix}}.
\]
Obviously, the prefix part of $a(\pi)$, namely $a_1a_2\dots a_k$, lies in $\I_k(\012)$, and by Proposition \ref{pro-inversion}, the suffix part of $a(\pi)$ is a weakly decreasing sequence with each $a_i>0$ for $k+1<i\leq n$. Now we analyze the contribution of the two parts to $I_n(q,t)$.
\begin{itemize}
\item [(A)]\textbf{Prefix part of $a(\pi)$}.
\\
Since $a_1a_2\dots a_k\in\I_k(\012)$ and $a_{k+1}=0$ does not create a descent with any element of the prefix, this part contributes $I_k(q,t)$.\\
\item[(B)]\textbf{Suffix part of $a(\pi)$}.
\\
Let $m=n-k-1$, then the suffix has length $m$. Because the entires of $a(\pi)$ satisfy $a_i\leq i-1$, the first suffix entry satisfies $a_{k+2}\leq k+1$. Since the suffix is weakly decreasing, this automatically guarantees the required upper bounds for all its subsequent entries. Consequently, the suffix is precisely a weakly decreasing sequence
$$
k+1\geq b_1\geq b_2\geq\dots\geq b_m\geq1.
$$
If  $k<n-1$, i.e., the suffix part is not empty, then this part contributes:
\begin{align}\label{eq-1}
\sum_{k+1\geq b_1\geq b_2\geq\dots\geq b_m\geq1}t\cdot q^{b_1+b_2+\dots+b_m},
\end{align}
where the factor $t$ is arises because $a_{k+1}=0<a_{k+2}$, which implies $\pi_{n-k-1}>\pi_{n-k}$. Moreover, $\pi_1<\pi_2<\dots<\pi_{n-k-1}$, since $a_{k+2}\geq a_{k+3}\geq\dots\geq a_{n}$.

Now, we compute \eqref{eq-1}.
Let $c_i=b_i-1$ for $m\geq i\geq1$. Then $k\ge c_1\ge c_2\ge\cdots\ge c_m\ge0$, and $b_1+\cdots+b_m=m+c_1+\cdots+c_m$.

Therefore, we have
\begin{align}\label{eq-3}
\sum_{k+1\ge b_1\ge\cdots\ge b_m\ge1}
q^{b_1+\cdots+b_m}
=
q^m
\sum_{k\ge c_1\ge\cdots\ge c_m\ge0}
q^{c_1+\cdots+c_m}.
\end{align}
The right side of \eqref{eq-3} is the standard generating function for partitions whose Ferrers diagrams fit inside an $m\times k$ rectangle. Hence
$$
\sum_{k\geq c_1\geq\cdots\geq c_m\geq0}
q^{c_1+\cdots+c_m}
=
\qbinom{k+m}{m}.
$$
Since $k+m=n-1$,
\begin{align*}
\sum_{k+1\geq b_1\geq b_2\geq\dots\geq b_m\geq1}t\cdot q^{b_1+b_2+\dots+b_m}=tq^{n-1-k}\qbinom{n-1}{k}.
\end{align*}
This completes the calculation of \eqref{eq-1}.
\end{itemize}
Now, combining cases (A) and (B) and adding the case when $k+1=n$ (i.e., suffix part is empty), we obtain \eqref{eq-main}.
\end{proof}
\begin{table}[htbp]
\centering
\caption{First few terms of $I_n(q,t)$}
\begin{tabular}{|c|l|}
\hline
$n$ & $I_n(q,t)$ \\ \hline
1  & $1$ \\
2   & $1+tq$ \\
3   & $1+2tq+2tq^2$ \\
4   & $1+3tq+4tq^2+t^2q^2+3tq^3+t^2q^3+tq^4+t^2q^4$ \\ \hline
\end{tabular}
\end{table}
In the following section, we will use Theorem \ref{thm1} to give a $q$-exponential generating function of $I_n(q,t)$.
\section{$q$-exponential generating function of $I_n(q,t)$}\label{sect-2}
Before calculating the $q$-exponential generating function of $I_n(q,t)$, we first give some basic definitions that will be used in this section.
\\
For $n\in\mathbb{N}$, let $0!_q=1$,
$$
[n]_q:=\frac{1-q^n}{1-q}\quad\text{and}\quad n!_q:=\prod_{k=1}^n[k]_q\quad\text{for}\quad n\geq1.
$$
Two classical $q$-analogues of the exponential function $e^x$ are
$$
\mathrm{exp}_q(x):=\sum_{n=0}^{\infty}\frac{x^n}{n!_q}\quad\text{and}\quad \mathrm{Exp}_q(x):=\mathrm{exp}_{1/q}(x)=\sum_{n=0}^{\infty}q^{n\choose2}\frac{x^n}{n!_q}.
$$
Let $\mathbf{R}$ be a ring with unity and characteristic zero. For $f(x)\in\mathbf{R}[[x]]$ the $q$-derivative is defined as
$$
D_q(f(x)):=\frac{f(qx)-f(x)}{(q-1)x}.
$$
Thus $D_q(1)=0$ and for $n>0$, $D_q(x^n)=[n]_qx^{n-1}$.
\\

Let 
$$
I(q,t;x):=\sum_{n=0}^{\infty}I_n(q,t)\frac{x^n}{n!_q}, 
$$
then the following result holds.
\begin{theorem}
We have
$$
I(q,t;x)=\prod_{k=0}^{\infty}\left([t(\exp_q(q^{k+1}x)-1)+1](q-1)q^kx+1\right)^{-1}.
$$
\end{theorem}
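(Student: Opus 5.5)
The plan is to turn the recurrence \eqref{eq-main} of Theorem \ref{thm1} into a first-order $q$-difference equation for $I(q,t;x)$, and then solve that equation by iteration.

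\textbf{Step 1: the $q$-difference equation.} Multiply \eqref{eq-main} by $x^{n-1}/(n-1)!_q$ and sum over $n\geq1$. Since $D_q(x^n/n!_q)=x^{n-1}/(n-1)!_q$, the left side becomes $D_q I(q,t;x)$. The term $I_{n-1}(q,t)$ on the right gives $I(q,t;x)$. For the double sum, put $m=n-1-k\geq1$ and use
\[
\qbinom{n-1}{k}\frac{1}{(n-1)!_q}=\frac{1}{k!_q\,m!_q},
\]
which is the usual expression of the Gaussian binomial through $q$-factorials, since $(q;q)_j=(1-q)^j j!_q$. The sum then factors as a Cauchy product:
\[
t\sum_{k\ge0}I_k(q,t)\frac{x^k}{k!_q}\sum_{m\ge1}\frac{(qx)^m}{m!_q}
= t\,I(q,t;x)\bigl(\exp_q(qx)-1\bigr).
\]
Writing $A(x):=1+t(\exp_q(qx)-1)$, this gives $D_qI(x)=A(x)I(x)$. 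Unwinding the definition of $D_q$, this is
\[
I(x)=\frac{I(qx)}{1+(q-1)xA(x)}.
\]
The case $n=1$ is consistent with this, because the inner sum is empty and $I_1=I_0=1$.

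\textbf{Step 2: iteration.} Replace $x$ by $q^jx$ in the functional equation and iterate. This gives
\[
I(x)=I(q^Nx)\prod_{k=0}^{N-1}\bigl(1+(q-1)q^kxA(q^kx)\bigr)^{-1}.
\]
Here $A(q^kx)=t(\exp_q(q^{k+1}x)-1)+1$, which is exactly the factor appearing in the claimed product.

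\textbf{Step 3: the limit $N\to\infty$.} This is the only delicate point: we must choose a topology in which the limit makes sense. Working $x$-adically alone does not work, since $q^Nx$ does not tend to $0$. Instead, work in $\mathbb{Q}[t][[q,x]]$.
\begin{itemize}[leftmargin=*]
\item Each $n!_q$ has constant term $1$ as a series in $q$, so it is invertible there. Hence $\exp_q$ and $I$ make sense in this ring.
\item $I(q^Nx)-1$ is divisible by $q^N$, so $I(q^Nx)\to I(0)=I_0=1$.
\item The $k$-th factor is $1+O(q^k)$. Hence the infinite product and its inverse converge in the $(q,x)$-adic topology.
\end{itemize}
Letting $N\to\infty$ then yields the stated product formula. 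I would close by checking the expansion against Table 1 up to $x^2$ as a sanity check.
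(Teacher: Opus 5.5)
Your proof is correct and follows the same route as the paper. You convert the recurrence of Theorem~\ref{thm1} into $D_qI=(t(\exp_q(qx)-1)+1)I$, rewrite this as $I(qx)=(1+(q-1)x(t(\exp_q(qx)-1)+1))I(x)$, and iterate using $I(q,t;0)=1$; your Step~3, which works in $\mathbb{Q}[t][[q,x]]$ so that $I(q^Nx)\to 1$ and the infinite product converges, makes rigorous the limit that the paper's ``iterating gives'' leaves implicit.
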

\begin{proof}
By Theorem \ref{thm1}, we have the following equation.
$$
\sum_{n=1}^{\infty}I_n(q,t)\frac{x^{n-1}}{(n-1)!_q}=t\sum_{n=1}^{\infty}\sum_{k=0}^{n-2}q^{n-k-1}I_k(q,t)\qbinom{n-1}{k}\frac{x^{n-1}}{(n-1)!_q}+\sum_{n=1}^{\infty}I_{n-1}(q,t)\frac{x^{n-1}}{(n-1)!_q},
$$
which yields
\begin{align}\label{generating}
\sum_{n=1}^{\infty}I_n(q,t)\frac{x^{n-1}}{(n-1)!_q}=t\sum_{n=1}^{\infty}\sum_{k=0}^{n-2}\frac{(qx)^{n-k-1}}{(n-1-k)!_q}I_k(q,t)\frac{x^k}{k!_q}+\sum_{n=1}^{\infty}I_{n-1}(q,t)\frac{x^{n-1}}{(n-1)!_q}.
\end{align}
By the definition of $q$-derivative and $\exp_q(x)$, equation \eqref{generating} becomes
\begin{align}\label{generating-2}
D_q(I(q,t;x))=t\cdot I(q,t;x)\cdot(\exp_q(qx)-1)+I(q,t;x).
\end{align}
Rewrite equation \eqref{generating-2} as
\begin{align*}
\frac{I(q,t;qx)-I(q,t;x)}{(q-1)x}=t\cdot I(q,t;x)\cdot(\exp_q(qx)-1)+I(q,t;x),
\end{align*}
which yields 
\begin{align}\label{generating-3}
\frac{I(q,t;qx)}{I(q,t;x)}=[t(\exp_q(qx)-1)+1](q-1)x+1.
\end{align}
Since $I(q,t;0)=1$, iterating the relation \eqref{generating-3} gives
$$
\frac{1}{I(q,t;x)}=\prod_{k=0}^{\infty}\left([t(\exp_q(q^{k+1}x)-1)+1](q-1)q^kx+1\right),
$$
which yields the result.
\end{proof}
\section{The connection with restricted-growth words}\label{sect-3}
In this section, we will give an explanation of $I_n(q,t)$ on restricted-growth words. The bridge is provided by using partitions via modifying a bijection introduced by Claesson \cite{C01}. 

A \alert{partition} of a set $S$ is a family $A=\{A_1, A_2,\dots, A_k\}$ of pairwise disjoint non-empty subsets of $S$ such that $S=\cup_iA_i$. We call $A_i$ a \alert{block} of $A$. The total number of partitions of $[n]$ is the $n$-th  Bell number. In \cite{C01}, Claesson gave a bijection between partitions of $[n]$ and $S_n(1\underline{23})$, and then combined it with the reversal map to prove the cardinality of $S_n(1\underline{23})$. Now, we modify this bijection, given by the following map $\Phi$, in order to adapt it to $S_n(\321)$.
\vskip 3mm
\noindent\textbf{Map $\Phi$.} Given a partition $A$ of $[n]$, a standard representation of $A$ by requiring that:
\begin{itemize}
\item[(a)] 
Each block is written with its least element last, write the other elements increasing.\\
\item[(b)]
The blocks are written in increasing order of their least element, and with dashes separating the blocks.
\end{itemize}
Define $\Phi(A)$ to be the permutation we obtain from $A$ by writing it in standard form and erasing the dashes.
\begin{example}\label{example-1}
As an illustration of $\Phi$, let
$$
A=\{\{1, 3, 6\}, \{2, 7\}, \{4\}, \{5, 8, 9\}\}.
$$
Its standard form is $361-72-4-895$, thus $\Phi(A)=361724895$.
\end{example}
A \alert{right-to-left minimum} of $\pi=\pi_1\pi_2\dots\pi_n\in\S_n$ is an element $\pi_i$ such that $\pi_i<\pi_j$ for every $j>i$. It is obviously that the last element of each block of $A$, when written in standard form, is a right-to-left minimum.

Now, we prove that the map $\Phi$ is indeed a bijection.
\begin{theorem}
For $n\geq1$, the map $\Phi$ is a bijection from partitions of $[n]$ to $\S_n(\321)$.
\end{theorem}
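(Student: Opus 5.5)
To prove that $\Phi$ is a bijection onto $\S_n(\321)$, I would show three things: $\Phi(A)$ always avoids $\321$, $\Phi$ is injective, and $\Phi$ is surjective onto $\S_n(\321)$ by exhibiting an explicit inverse. Since both sets are known to have the $n$-th Bell number as cardinality (the introduction notes $|\S_n(\321)|$ is the Bell number via Claesson), injectivity together with well-definedness would already suffice. I nevertheless plan to construct the inverse directly, because the recovery procedure is the real content of the argument.

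\textbf{Well-definedness.} Write $\Phi(A)$ in standard form $B_1-B_2-\cdots-B_k$. Inside a block, the entries increase except at the final step, which drops to the block minimum. A block begins right after the previous block's minimum, and every element of a later block exceeds that minimum. So the descents of $\Phi(A)$ are exactly the positions where a block's largest element is followed by its minimum $m_i$. Suppose $\pi_j\pi_{j+1}\pi_l$ were an occurrence of $\321$ with $j+1<l$. Then $\pi_j>\pi_{j+1}$ forces $\pi_{j+1}=m_i$ to be the last entry of some block $B_i$. Every later entry lies in a block $B_{i'}$ with $i'>i$, and these blocks contain only elements at least $m_{i'}>m_i$. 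Thus $m_i$ is a right-to-left minimum, and no $\pi_l<\pi_{j+1}$ exists, a contradiction.

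\textbf{Inverse map.} Given $\pi\in\S_n(\321)$, cut $\pi$ immediately after each right-to-left minimum. This is natural because the block minima are increasing and each is smaller than everything after it. The claim to verify is that each resulting segment has the form ``increasing run, then its minimum last''. Consider a segment $S$ lying between two consecutive right-to-left minima, and let $r$ be the right-to-left minimum ending it. The entries of $S$ before $r$ are not right-to-left minima.
\begin{itemize}
\item If those entries contained a descent $\pi_j>\pi_{j+1}$, then $\pi_{j+1}$ would not be a right-to-left minimum, so some later $\pi_l<\pi_{j+1}$ would exist. This gives a $\321$ occurrence, which is impossible, so the entries before $r$ increase.
\item Each entry $x$ of $S$ before $r$ has some smaller entry to its right. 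The smallest entry to the right of $x$ is itself a right-to-left minimum, namely the first right-to-left minimum after $x$, which is $r$. Hence $r<x$, so $r$ is the minimum of $S$.
\end{itemize}
Therefore the segments are exactly the blocks written in form (a). Moreover, they appear in increasing order of their minima, since right-to-left minima increase from left to right, which is condition (b). Applying $\Phi$ to this partition returns $\pi$.

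Conversely, starting from $A$, the right-to-left minima of $\Phi(A)$ are exactly the block minima. Block minima are right-to-left minima by the well-definedness argument. No other entry is one, because each non-minimum entry of a block is followed by that block's minimum. So cutting after right-to-left minima recovers $A$, and the two maps are mutually inverse.

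\textbf{Main obstacle.} The step I expect to require the most care is the argument that, within a segment, the entries before the terminating right-to-left minimum both increase and exceed it. This is exactly where $\321$-avoidance and the ``first right-to-left minimum to the right'' observation are used.
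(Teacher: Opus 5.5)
Your proof is correct and follows essentially the same route as the paper. Well-definedness comes from the fact that every descent of $\Phi(A)$ ends at a block minimum, which is a right-to-left minimum, and the inverse is obtained by cutting $\pi$ at its right-to-left minima. You also fill in details the paper leaves implicit, namely that each segment is increasing and ends in its own minimum, and that the two maps are mutually inverse. You correctly cut \emph{after} each right-to-left minimum, whereas the paper's text says ``before,'' which is a slip.
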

\begin{proof}
If $A$ is a partition of $[n]$, then we first prove that $\Phi(A)\in\S_n(\321)$. For convenience, let $\pi=\Phi(A)$. It is sufficient to prove that for every descent pair $(\pi_i, \pi_{i+1})$ (with $\pi_i>\pi_{i+1}$) of $\pi$, we have $\pi_j>\pi_{i+1}$ for all $j>i+1$. By the construction of the map $\Phi$, in the standard form of $A$, the only possible descent pairs occur at the last two elements of a block if that block has more than one elements. Since the last element of each block is the least element in its block, and the blocks are written in increasing order of their least element, all the elements after $\pi_{i+1}$ are larger than $\pi_{i+1}$. 

Now, we give the reverse of $\Phi$ to prove that $\Phi$ is a bijection. For $\pi\in\S_n(\321)$, since it avoids $\321$, the sequence between two adjacent right-to-left minimum elements is increasing.  We insert a dash before each right-to-left minimum element; this naturally gives a partition of $[n]$, and this procedure is obviously the reverse of $\Phi$.

Thus, we complete the proof.
\end{proof}
A word $\omega=w_1w_2\dots w_n$ is called a \alert{restricted-growth word of length $n$} if $w_1=1$ and, for every position $i>1$, the value $w_i$ is at most one more than the maximum value seen so far. 
This condition is written as $1\leq w_i\leq 1+\max\{w_1, w_2, \dots, w_{i-1}\}$. For instance, $\omega=1213$ is a restricted-growth word of length $4$. In fact, set partitions of $[n]$ have one-to-one correspondence with restricted-growth words of length $n$. For a partition $A=\{A_1, A_2, \dots, A_k\}$ of $[n]$, with $\min(A_1)<\min(A_2)<\dots<\min(A_k)$, where $\min(A_i)$ denotes the least element of $A_i$ for $k\geq i\geq 1$. Let $\pi=\Phi(A)$. Then  $\omega(\pi)=w_1w_2\dots w_n$ is the \alert{restricted-growth word of $\pi$}, where $w_i=j$ if and only if $i\in A_j$. This gives a correspondence from set partitions to restricted-growth words of length $[n]$. For Example \ref{example-1}, $\omega(\pi)=121341244$. Now, we have the following lemma.
\begin{lemma}\label{lem-3.1}
For $n\geq1$, let $A$ be a partition of $[n]$, and set $\pi=\Phi(A)$. Then we have
$$
\inv(\pi)=\inv(\omega(\pi))+n-k,
$$
where $k$ is the number of blocks of $A$.
\end{lemma}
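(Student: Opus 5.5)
The plan is to count the inversions of $\pi=\Phi(A)$ by looking at pairs of \emph{values} rather than pairs of positions. A pair of values $x<y$ in $[n]$ forms an inversion of $\pi$ exactly when $y$ appears to the left of $x$ in $\pi$. Every such pair falls into one of two cases: either $x$ and $y$ lie in the same block of $A$, or they lie in different blocks. I will count each case separately, reading off the answer from the standard form used to define $\Phi$.

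\emph{Same block.} Suppose $x,y\in A_j$. By rule (a), the block $A_j$ is written with its non-minimal elements in increasing order, followed by $\min(A_j)$.
\begin{itemize}
\item If neither $x$ nor $y$ is the minimum, then $x$ precedes $y$, so the pair is not an inversion.
\item If $x=\min(A_j)$, then $y$ precedes $x$, so the pair is an inversion.
\end{itemize}
Hence $A_j$ contributes exactly $|A_j|-1$ inversions. Summing over the blocks gives $\sum_j(|A_j|-1)=n-k$ inversions in total.

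\emph{Different blocks.} Suppose $x\in A_a$ and $y\in A_b$ with $a\neq b$. By rule (b), the blocks appear in $\pi$ in the order $A_1,A_2,\dots,A_k$, and each block occupies a contiguous segment. Therefore $y$ precedes $x$ in $\pi$ if and only if $b<a$. By definition of $\omega(\pi)$ we have $w_x=a$ and $w_y=b$, and $x<y$ as positions in $\omega(\pi)$. So the condition $b<a$ says precisely that the positions $x<y$ satisfy $w_x>w_y$, i.e.\ that $(x,y)$ is an inversion of the word $\omega(\pi)$.

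Conversely, a pair of positions $x<y$ with $w_x=w_y$ is never a word inversion. Thus the word inversions of $\omega(\pi)$ are in bijection with the cross-block inversions of $\pi$. Adding the two cases gives $\inv(\pi)=\inv(\omega(\pi))+n-k$.

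The main point to state carefully is the convention being used: $\inv(\omega)=|\{(i,j):i<j,\ w_i>w_j\}|$, and the letter $w_i$ of $\omega(\pi)$ records the block containing the value $i$. Once this is fixed, the identification of positions in $\omega(\pi)$ with values in $\pi$ makes the argument purely a matter of bookkeeping.

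As a sanity check, Example \ref{example-1} gives $\inv(\pi)=11$ and $\inv(\omega(\pi))=6$. With $n-k=9-4=5$, this agrees with the formula, since $6+5=11$.
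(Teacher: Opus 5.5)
Your proof is correct and follows essentially the same approach as the paper: you split the inversions of $\pi$ into within-block pairs, each block contributing $|A_j|-1$ for a total of $n-k$, and cross-block pairs, which match the word inversions of $\omega(\pi)$ because the blocks appear in order of their minima. Your explicit case check inside a block (the case $y=\min(A_j)$ cannot occur since $x<y$) and your verification on Example~\ref{example-1}, where $11=6+5$, are accurate.
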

\begin{proof}
The inversion number of $\inv(\pi)$ can be divided into two parts: one contributed by inversions inside a block in the standard form of $A$, and the other contributed by inversions between two different blocks of $A$. Assume $A_i$ is a block in the standard form of $A$; then the last element of $A_i$ (which is the least element of that block) forms an inversion with every other element of the block. Hence this block contributes $|A_i|-1$ inversions. Since $A$ has $k$ blocks, they contribute
$$
\sum_{i=1}^k(|A_i|-1)=n-k.
$$
Next, consider the inversions between two different blocks. Let $x<y$, with $x\in A_i$ and $y\in A_j$. Since the blocks occur from left to right according to their minima, an inversion between $x$ and $y$ in $\pi$ occurs precisely when $i>j$. But this is exactly the condition $w_x>w_y$ in $\omega(\pi)=w_1w_2\dots w_n$. Thus, the cross-block inversions in the standard form of $A$ are precisely the ordinary word inversions of $\omega(\pi)$.

Combining the two parts, we obtain the result.
\end{proof}
For example, let  $A=\{\{1, 4\}, \{2, 3\}\}$ be a set partition of $[4]$. Then $\pi=\Phi(A)=4132$ and $\omega(\pi)=1221$. We calculate $\inv(\pi)=4$, $\inv(\omega(\pi))=2$, and $n-k=4-2=2$. Hence, $\inv(\pi)=\inv(\omega(\pi))+n-k$.
The following lemma give the relationship between descents on $\S_n(\321)$ and the number of repeated elements in  restricted-growth words of length $n$. Before stating the lemma, we first give some definitions. Let $A$ be a partition of $[n]$, and let $\pi=\Phi(A)$. For $1\leq i\leq n$, we define 
$$
r_i(\omega(\pi))=\begin{cases}
0, &\text{if $i$ appears at most once in $\pi$}; \\
1, & \text{if $i$ appears more than once in $\pi$}.
\end{cases}
$$
Let $\rp(\omega(\pi))=\sum_{i=1}^nr_i(\omega(\pi))$.
\begin{lemma}\label{lem-3.2}
For $n\geq1$, let $A$ be a partition of $[n]$ and set $\pi=\Phi(A)$. Then we have
$$
\des(\pi)=\rp(\omega(\pi)).
$$
\end{lemma}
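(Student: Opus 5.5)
We will prove $\des(\pi)=\rp(\omega(\pi))$ by showing that both sides count the blocks of $A$ of size at least two. Here $\rp(\omega(\pi))$ is read in the intended sense: $r_i=1$ exactly when the letter $i$ occurs more than once in the word $\omega(\pi)$. Since $w_x=j$ if and only if $x\in A_j$, the letter $j$ occurs in $\omega(\pi)$ exactly $|A_j|$ times. Hence $\rp(\omega(\pi))$ is the number of indices $j$ with $|A_j|\geq 2$, that is, the number of non-singleton blocks.

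Next we count $\des(\pi)$ using the standard form of $A$ that defines $\Phi$. We split the adjacent pairs $(\pi_i,\pi_{i+1})$ into two kinds.

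\emph{Pairs inside one block.} A block $A_j$ is written as its non-minimal elements in increasing order, followed by $\min(A_j)$. The pairs among the increasing part are ascents. If $|A_j|\geq2$, the last pair consists of some element of $A_j$ followed by $\min(A_j)$, which is smaller, so it is a descent. Thus each non-singleton block contributes exactly one descent, and each singleton block contributes none.

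\emph{Pairs straddling two consecutive blocks.} Here $\pi_i=\min(A_j)$ and $\pi_{i+1}$ is the first written element of $A_{j+1}$. Every element of $A_{j+1}$ is at least $\min(A_{j+1})$, and $\min(A_{j+1})>\min(A_j)$ because blocks are ordered by their least elements. So $\pi_{i+1}>\pi_i$, and every such pair is an ascent.

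Adding the two kinds, $\des(\pi)$ equals the number of blocks of size at least two, which matches the count of $\rp(\omega(\pi))$ above. The only point needing care is the boundary case between blocks; once we use that the first element of $A_{j+1}$ exceeds $\min(A_{j+1})$, the rest is immediate. This is essentially the observation already made in the proof that $\Phi$ is a bijection, that descents occur only at the last two entries of a block.
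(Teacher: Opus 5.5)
Your proof is correct and takes essentially the same approach as the paper: both sides are identified with the number of blocks of $A$ of size at least two, using that the descents of $\Phi(A)$ occur exactly at the last two entries of each non-singleton block. You also check explicitly that the pairs straddling two consecutive blocks are ascents, which the paper only asserts. In addition, you read $r_i$ as counting occurrences in $\omega(\pi)$ rather than in $\pi$, which is the intended reading of the paper's definition.
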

\begin{proof}
By the construction of $\Phi$, descents occur only at the last two elements of blocks that have length greater than $1$ in the standard form of $A$. Then the $i$-th block in the standard form of $A$ has length greater than $1$ if and only if  $i$ appears more than once in $\omega(\pi)$. This completes the proof.
\end{proof}
Now let $W_n$ be the set of restricted-growth words of length $n$, and for $w\in W_n$ let $\mathrm{L}(w)$ be the largest element in $w$. Combining Lemma \ref{lem-3.1} and Lemma \ref{lem-3.2}, we have the following theorem.
\begin{theorem}\label{theorem-3.2}
For $n\geq1$, we have
\begin{align*}
I_n(q,t)&=\sum_{w\in W_n}q^{\inv(w)+n-\mathrm{L}(w)}t^{\rp(w)}\\
&=q^n\sum_{w\in W_n}q^{\inv(w)-\mathrm{L}(w)}t^{\rp(w)}.
\end{align*}
\end{theorem}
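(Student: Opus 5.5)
The plan is to transport the generating function $I_n(q,t)$ from $\S_n(\321)$ to restricted-growth words through the bijection $\Phi$. By the theorem just proved, $\Phi$ is a bijection from partitions of $[n]$ onto $\S_n(\321)$. Partitions of $[n]$ are in turn in bijection with $W_n$ via $A\mapsto\omega(\Phi(A))$, where $w_i=j$ if and only if $i\in A_j$ and the blocks are indexed by increasing minima. This second map is a bijection because the indexing by increasing minima forces exactly the restricted-growth condition: the first occurrence of the value $j$ sits at $\min(A_j)$, which comes after $\min(A_{j-1})$. Composing the two, $\pi\mapsto\omega(\pi)$ is a bijection $\S_n(\321)\to W_n$. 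So I will reindex the sum
\[
I_n(q,t)=\sum_{\pi\in\S_n(\321)}q^{\inv(\pi)}t^{\des(\pi)}
\]
by $w=\omega(\pi)\in W_n$.

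Next I rewrite each exponent. By Lemma \ref{lem-3.1}, $\inv(\pi)=\inv(\omega(\pi))+n-k$, where $k$ is the number of blocks of $A$. Since the blocks are labeled $1,\dots,k$ and every label occurs, $k$ equals the largest letter of $\omega(\pi)$, that is, $k=\mathrm{L}(w)$. By Lemma \ref{lem-3.2}, $\des(\pi)=\rp(\omega(\pi))$.

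One point needs care. The definition of $r_i$ says ``appears in $\pi$'', but the intended meaning, as used in the proof of Lemma \ref{lem-3.2}, is ``appears in $\omega(\pi)$''. With that reading, $\rp(w)$ is intrinsic to $w$, and so are $\inv(w)$ and $\mathrm{L}(w)$. The summand therefore becomes $q^{\inv(w)+n-\mathrm{L}(w)}t^{\rp(w)}$, which gives the first equality. The second equality just factors $q^n$ out of every term.

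I expect no real obstacle. The only step needing justification beyond the cited lemmas is the identification of the number of blocks $k$ with $\mathrm{L}(w)$, together with the bijectivity of partitions to restricted-growth words; both follow directly from the labeling by increasing block minima.
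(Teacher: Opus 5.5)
Your proposal is correct and follows the paper's proof: transport the sum through $\Phi$ and the partition-to-word correspondence, apply Lemmas \ref{lem-3.1} and \ref{lem-3.2}, and observe that the number of blocks equals $\mathrm{L}(w)$. Your reading of $r_i$ as referring to occurrences in $\omega(\pi)$ rather than $\pi$ is the intended one and fixes a typo in the paper's definition.
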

\begin{proof}
The result holds due to the fact that for a partition $A$ of $[n]$, the number of blocks of $A$ is exactly the largest element of $\omega(\Phi(A))$.
\end{proof}
Now, let $J_n(q,t):=\sum_{w\in W_n}q^{\inv(w)-\mathrm{L}(w)}t^{\rp(w)}$. Then, by Theorems \ref{thm1} and \ref{theorem-3.2}, we have the following recurrence for $J_n(q,t)$.
\begin{corollary}
For $n\geq1$, we have
$$
q\cdot J_n(q,t)=t\sum_{k=0}^{n-2}\qbinom{n-1}{k}J_k(q,t)+J_{n-1}(q,t),
$$
with $J_0(q,t)=1$.
\end{corollary}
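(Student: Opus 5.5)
The plan is to substitute the identity of Theorem \ref{theorem-3.2} into the recurrence \eqref{eq-main} of Theorem \ref{thm1} and then cancel a common power of $q$.

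First I will record that $I_m(q,t)=q^mJ_m(q,t)$ holds for every $m\geq0$. For $m\geq1$ this is exactly Theorem \ref{theorem-3.2}, read with the definition of $J_m(q,t)$. For $m=0$ both sides equal $1$, using $I_0(q,t)=1$ and the stated initial value $J_0(q,t)=1$; this also agrees with the convention that $W_0$ consists of the empty word, with $\inv=0$, $\mathrm{L}=0$ and $\rp=0$. I will state this boundary case explicitly, because the sum in \eqref{eq-main} contains the term $k=0$ and Theorem \ref{theorem-3.2} is only stated for $n\geq1$.

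Next I will substitute $I_n=q^nJ_n$, $I_{n-1}=q^{n-1}J_{n-1}$ and $I_k=q^kJ_k$ for $0\leq k\leq n-2$ into \eqref{eq-main}. This gives
\[
q^{n}J_n(q,t)=\sum_{k=0}^{n-2}t\,q^{n-k-1}q^{k}\qbinom{n-1}{k}J_k(q,t)+q^{n-1}J_{n-1}(q,t).
\]
The key observation is that $q^{n-k-1}\cdot q^k=q^{n-1}$ does not depend on $k$. Every term therefore carries the factor $q^{n-1}$, and dividing through by $q^{n-1}$, which is valid in $\mathbb{Z}[q,q^{-1},t]$, yields the claimed recurrence.

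There is no real obstacle here. The only point needing care is the boundary bookkeeping: the case $k=0$ inside the sum, and the case $n=1$, where the sum is empty and the identity reduces to $qJ_1=J_0$. This can be checked directly: $W_1=\{1\}$ gives $J_1=q^{-1}$, so $qJ_1=1=J_0$.
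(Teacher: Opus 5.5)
Your proposal is correct and is exactly the argument the paper intends: substitute $I_m(q,t)=q^mJ_m(q,t)$ from Theorem~\ref{theorem-3.2}, including the boundary case $m=0$, into the recurrence \eqref{eq-main} of Theorem~\ref{thm1}, then divide by the common factor $q^{n-1}$ in $\mathbb{Z}[q,q^{-1},t]$. The paper leaves this derivation implicit, so your explicit treatment of the $k=0$ term and your check $qJ_1=1=J_0$ at $n=1$ usefully spell out the bookkeeping.
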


\end{document}